\newtheorem{theorem}{Theorem}
\theoremstyle{plain}
\newtheorem{conjecture}{Conjecture}
\newtheorem{corollary}{Corollary}
\newtheorem{lemma}{Lemma}
\numberwithin{equation}{section}
\begin{document}
	\title[Majority choosability of countable graphs]{Majority choosability of countable graphs}
	\author{Marcin Anholcer}
	\address{Faculty of Informatics and Electronic Economy, Pozna\'{n}
		University of Economics and Business,  61-875 Pozna\'{n}, Poland}
	\email{m.anholcer@ue.poznan.pl}
	\author{Bart\l omiej Bosek}
	\address{Theoretical Computer Science Department, Faculty of Mathematics and Computer Science, Jagiellonian
		University, 30-348 Krak\'{o}w, Poland}
	\email{bosek@tcs.uj.edu.pl}
	\author{Jaros\l aw Grytczuk}
	\address{Faculty of Mathematics and Information Science, Warsaw University
		of Technology, 00-662 Warsaw, Poland}
	\email{j.grytczuk@mini.pw.edu.pl}
	\thanks{Research supported by the National Science Center of Poland, grant 2015/17/B/ST1/02660.}
	
	\begin{abstract}
	In any vertex coloring of a graph some edges have differently colored ends (\emph{good} edges) and some are monochromatic (\emph{bad} edges). In a proper coloring all edges are good. In a \emph{majority coloring} it is enough that for every vertex $v$, the number of bad edges incident to $v$ does not exceed the number of good edges incident to $v$. A well known result of Lov\'{a}sz \cite{Lovasz} asserts that every finite graph has a majority $2$-coloring. A similar statement for countably infinite graphs is a challenging open problem, known as the \emph{Unfriendly Partition Conjecture}.
	
	We consider a natural list variant of majority coloring. A graph is \emph{majority $k$-choosable} if it has a majority coloring from any lists of size $k$ assigned arbitrarily to the vertices. We prove that every countable graph is majority $4$-choosable. We also consider a natural analog of majority coloring for directed graphs. We prove that every countable digraph is also majority $4$-choosable. We pose list and directed analogs of the Unfriendly Partition Conjecture, stating that every countable graph is majority $2$-choosable and every countable digraph is majority $3$-choosable.
	\end{abstract}
	
	\maketitle
\section{Introduction}
Let $G=(V,E)$ be a simple graph and let $c$ be any vertex coloring of $G$. An edge $uv\in E$ is \emph{good} in coloring $c$ if $c(u)\neq c(v)$, otherwise it is \emph{bad}. A coloring $c$ is called a \emph{majority coloring} of a graph $G$ if every vertex $v$ has at least as many good as bad incident edges.

An old theorem of Lov\'{a}sz \cite{Lovasz} asserts that every finite graph $G$ is majority $2$-colorable. The proof is very simple: just notice that any vertex $2$-coloring that minimizes the total number of bad edges in $G$ satisfies the majority condition. Indeed, if not, then there is a vertex $v$ with an excess of bad incident edges. But then switching the color of $v$ exchanges bad and good edges incident to $v$, which results in decreasing the total number of bad edges in $G$.

The majority coloring problem can be considered for infinite graphs as well. The majority condition says than that the cardinality of the set of bad edges incident to any vertex $v$ is at most the cardinality of the set of good edges incident to $v$. The question whether every countable graph has a majority $2$-coloring was posed by Cowan and Emerson \cite{Cowan Emerson} (see \cite{Aharoni}) and became known as the \emph{Unfriendly Partition Conjecture}.

\begin{conjecture}[Unfriendly Partition Conjecture \cite{Cowan Emerson}]\label{Conjecture UPC}
	Every countable graph is majority $2$-colorable.
\end{conjecture}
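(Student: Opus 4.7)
The natural starting point is Lov\'{a}sz's switching argument recalled in the introduction. In the finite case one minimizes the total number of bad edges; any minimizer must be a majority coloring because flipping the color of a vertex $v$ with an excess of bad incident edges strictly reduces the total. In the countable setting this potential fails: the number of bad edges can be $\aleph_0$, and a flip at a vertex of infinite degree need not change this cardinal. My plan is to look for a refined potential, or an iterative procedure, that salvages Lov\'{a}sz's idea.

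My first attempt would be a transfinite flipping scheme. Enumerate the vertices as $v_0,v_1,v_2,\dots$ and start from an arbitrary $2$-coloring $c_0$. At stage $\alpha+1$, let $v_i$ be the vertex of smallest index that is currently unsatisfied and flip its color; at limit stages take any coloring that agrees with cofinally many earlier stages on each fixed vertex (for instance via a diagonal/Ramsey argument on the sequence). If the process ever halts, we have a majority coloring. The goal would be to exhibit a well-founded monovariant, for example the lexicographic sequence $\bigl(b_0(c),b_1(c),b_2(c),\dots\bigr)$ where $b_i(c)$ is the number of bad edges at $v_i$, and to show each flip decreases it.

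To strengthen this, I would treat the vertices of infinite degree separately from the vertices of finite degree. A vertex $v$ of infinite degree $\kappa$ is satisfied as soon as its two color classes of neighbors both have cardinality $\kappa$, a much looser condition than at finite-degree vertices. I would therefore (i) first handle the infinite-degree vertices by a greedy transfinite construction, arranging both sides of each such vertex to have the appropriate cardinality, and (ii) then reduce to a problem on the subgraph induced by finite-degree vertices together with fixed boundary data coming from (i), where one hopes to apply Lov\'{a}sz-type switching plus a compactness/K\"{o}nig argument on finite approximations.

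The principal obstacle is the coupling between the two parts. Flips performed during the finite-degree phase may occur at vertices adjacent to infinite-degree vertices, and although a single flip never disturbs a cardinality balance there, the accumulation of infinitely many flips may shift that balance in the limit. Guaranteeing termination (or a well-defined limit) of the transfinite procedure in the presence of this back-reaction is, as I understand it, exactly the difficulty that has kept the Unfriendly Partition Conjecture open, and it is presumably why the authors pivot to proving the weaker majority $4$-choosability statement rather than attacking the conjecture head-on.
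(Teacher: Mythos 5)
The statement you were asked about is not a theorem of the paper: it is the Unfriendly Partition Conjecture, which the paper explicitly presents as an open problem (the authors prove only the weaker results that every countable graph is majority $4$-choosable, and survey partial cases --- locally finite graphs, graphs with all degrees infinite, finitely many vertices of infinite degree, rayless graphs, graphs with no subdivision of an infinite clique). So there is no proof in the paper to compare against, and your proposal, as you yourself half-concede in the last paragraph, is a plan of attack rather than a proof. It should not be presented as one.

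The concrete gaps are as follows. First, the proposed monovariant $\bigl(b_0(c),b_1(c),b_2(c),\dots\bigr)$ under lexicographic order is not decreased by a flip: when you flip the smallest-index unsatisfied vertex $v_i$, the quantities $b_j$ for neighbors $v_j$ with $j<i$ can increase, and those coordinates dominate $b_i$ in the lexicographic order; moreover the lexicographic order on infinite sequences of naturals is not well-founded, so even a genuine decrease at every step would not force termination. Second, the limit stages are not well defined: a fixed vertex may be flipped cofinally often, and while a diagonal argument extracts a limit coloring along a subsequence, satisfaction at a vertex of infinite degree is not preserved under such limits --- an infinite accumulation of flips among its neighbors can make one color class finite in the limit even though it was infinite at every stage. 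This is exactly the ``back-reaction'' you name, and naming it does not resolve it. Third, the two-phase decomposition (infinite-degree vertices first, then finite-degree vertices by switching plus compactness) is essentially the strategy behind the known partial results; it is known to succeed only under extra hypotheses (e.g.\ finitely many vertices of infinite degree, as in Aharoni--Milner--Prikry), and your sketch supplies no new mechanism for decoupling the two phases in general. In short, the proposal reproduces the standard obstruction rather than overcoming it, and the conjecture remains open.
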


The conjecture can be easily verified for locally finite graphs as well as for graphs with all vertices of infinite degree. Aharoni, Milner, and Prikry \cite{Aharoni} proved it for graphs with finitely many vertices of infinite degree. Bruhn, Diestel, Georgakopoulos, and Spr\"{u}ssel \cite{Bruhn} confirmed it for rayless graphs, and Berger \cite{Berger} proved it for graphs not containing a subdivision of an infinite clique. These results are valid for arbitrary infinite graphs (not only countable). On the other hand, Milner and Shelah \cite{Shelah} proved that every infinite graph is majority $3$-colorable, and that there exist uncountable graphs demanding three colors.

We study a natural list version of majority coloring. Suppose that each vertex $v$ of a graph $G$ is given a list $L(v)$ of colors, and only colors from $L(v)$ may be used to color $v$. A graph $G$ is called \emph{majority $k$-choosable} if it has a majority coloring from arbitrary lists of size $k$. We state the following strengthening of the Unfriendly Partition Conjecture. 

\begin{conjecture}\label{Conjecture LUPC}
	Every countable graph is majority $2$-choosable.
\end{conjecture}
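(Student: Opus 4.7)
The plan is to recognize first that this statement strengthens the Unfriendly Partition Conjecture of Cowan and Emerson, so any proof of Conjecture~\ref{Conjecture LUPC} would, in particular, settle that open problem; one should therefore not expect a short argument. With this in mind, the natural route is to adapt the known techniques in two stages, finite then transfinite.

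First I would verify the finite case by imitating Lov\'asz's argument. Given a finite graph with lists $L(v) = \{a_v, b_v\}$ of size two, I would consider a coloring that minimizes the total number of bad edges. If some vertex $v$ violates the majority condition while colored $a_v$, then strictly more than half of the neighbors of $v$ are colored $a_v$, so in particular strictly fewer neighbors are colored $b_v$. Switching $v$ to $b_v$ then strictly decreases the total number of bad edges, contradicting minimality. Hence finite majority $2$-choosability follows from essentially the same argument as finite majority $2$-colorability, and the real difficulty lies in the passage to infinite graphs.

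The hard step is the extension to countable graphs, because a direct compactness reduction fails: the majority condition at a vertex of infinite degree is a cardinality condition on an infinite set, not determined by any finite restriction. A natural starting reduction is to split $V(G)$ into $V_f$, the vertices of finite degree, and $V_\infty$, the vertices of infinite degree. For $v \in V_\infty$ the majority condition is equivalent to $v$ having infinitely many good incident edges, since both sides of the cardinality inequality then equal $\aleph_0$. Building on this, I would attempt a transfinite construction in the spirit of Aharoni--Milner--Prikry or Berger: color carefully chosen finite blocks in stages so that (a) each vertex of $V_\infty$ eventually accumulates infinitely many good incident edges, while (b) each finitely touched block can be locally corrected by the finite argument above.

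The main obstacle, and the reason the conjecture appears genuinely hard, is that the two kinds of vertex interact badly. Committing to a color at an infinite-degree vertex immediately fixes one endpoint of infinitely many edges, possibly forcing neighboring finite-degree vertices into positions that cannot be repaired, their lists of size~$2$ leaving no room to maneuver. In the $k=4$ version handled in this paper, one can afford two ``spare'' colors that absorb such commitments; with $k=2$ any repair must be made without triggering a propagating cascade through $V_f$. Genuine progress on Conjecture~\ref{Conjecture LUPC} would most plausibly come either from an extension of Berger's forbidden-subdivision technique to the list setting, or from a new local-flip procedure whose termination is certified by a transfinite potential function rather than by a direct minimality argument.
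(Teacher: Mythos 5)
This statement is a conjecture, not a theorem: the paper offers no proof of it, and indeed it strengthens the open Unfriendly Partition Conjecture, so no proof should be expected. You correctly recognize this, so there is no hidden flaw to report --- but equally there is nothing here that constitutes a proof of the statement. What you do establish is only the finite case, and that part is sound: with lists of size two, a coloring minimizing the total number of bad edges must satisfy the majority condition, since a violating vertex could switch to the other list color and strictly decrease the count. This matches the paper's own one-line remark in Section 2.1 that every finite graph is majority $2$-choosable by ``the same argument with a coloring that minimizes the total number of bad edges,'' and your observation that compactness then handles locally finite graphs is also consistent with the paper.

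Your diagnosis of where the difficulty lies is accurate and aligns with how the paper actually proceeds for its weaker Theorem \ref{Theorem Main}: the paper splits $V$ into finite-degree and infinite-degree vertices, uses Lemma \ref{Lemma Infinite Sets} (a sublist-selection argument) to guarantee infinitely many good edges at infinite-degree vertices with lists of size three, and uses Lemma \ref{Lemma List Bernardi} (a weighted Bernardi-type potential argument) on the finite-degree part with lists of size four. Your remark that the extra colors serve as ``spare'' room to absorb the commitments made at infinite-degree vertices is essentially the right reading of why the paper's method needs $4$ and does not obviously descend to $2$. So: no error, correct finite case, correct identification of the obstruction, but the conjecture itself remains unproven by your proposal, as you yourself acknowledge.
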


Notice that it is not obvious \emph{a priori} that countable graphs are majority $k$-choosable for any finite constant $k$. However, we prove in Theorem \ref{Theorem Main} that every countable graph is majority $4$-choosable.

We also consider a natural analog of this problem for directed graphs. A vertex coloring of a digraph is a \emph{majority coloring} if every vertex $v$ has at most half bad out-going edges. This idea was introduced by Kreutzer, Oum, Seymour, van der Zypen, and Wood in \cite{Kreutzer}, where it was proved that every finite digraph is majority $4$-colorable and conjectured that three colors are sufficient. We extended this result in \cite{Anholcer} by proving that every finite digraph is majority $4$-choosable. In \cite{Anholcer BG 2} we proved that every countable digraph is majority $5$-colorable, which was subsequently improved to $4$ by Bowler, Erde and Pitz \cite{Bowler EP}. This leads to the following directed analog of the Unfriendly Partition Conjecture.

\begin{conjecture}
	Every countable digraph is majority $3$-colorable.
\end{conjecture}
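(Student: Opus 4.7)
The plan is to separate the vertex set $V(D)$ of a given countable digraph $D$ into vertices of infinite out-degree, $V_\infty$, and vertices of finite out-degree, $V_f$, and to color the two parts by different mechanisms. On $V_\infty$ the majority condition is relatively lax, since arranging many out-neighbors of each of the three colors is easy. On $V_f$ one would invoke a compactness argument of De Bruijn--Erd\H{o}s type, reducing the infinite problem to the (still conjectural) finite analog of \cite{Kreutzer} asserting that every finite digraph is majority $3$-colorable.

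More concretely, I would first enumerate $V_\infty=\{v_1,v_2,\ldots\}$ and color its vertices greedily: at step $i$ choose a color from $\{1,2,3\}$ that appears on strictly fewer than half of the already-colored out-neighbors of $v_i$, while preserving an infinite reserve of uncolored out-neighbors of $v_i$ in each color class, so that the final majority condition at $v_i$ is still attainable. Next, I would build a K\"onig-style tree whose nodes are majority $3$-colorings of finite induced subdigraphs of $D[V_f]$ compatible with the commitments imported from $V_\infty$, and extract an infinite branch to obtain a coloring of $V_f$. Combined, the two pieces would yield a majority $3$-coloring of $D$.

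The main obstacle is twofold. First, the finite conjecture of \cite{Kreutzer} is itself open, and any proof of the countable case that bypasses it would be surprising. Second, and more seriously, the compactness step on $V_f$ is fragile: arcs from $V_\infty$ to $V_f$ can impose infinitely many pre-committed constraints across $V_f$, and some finite induced subdigraph of $D[V_f]$ may fail to admit any compatible majority $3$-coloring, thereby invalidating the K\"onig argument. Overcoming this likely requires a simultaneous, iterative construction in which the coloring of $V_\infty$ is repeatedly refined to respect commitments flowing back from $V_f$ --- a significantly more delicate setup than the $4$-color arguments of the present paper and of \cite{Bowler EP}, analogous to the gap between the Unfriendly Partition Conjecture and the Milner--Shelah $3$-coloring theorem in the undirected setting.
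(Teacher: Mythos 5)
The statement you have been asked to prove is not a theorem of the paper but one of its open conjectures: the paper proves only that every countable digraph is majority $4$-choosable (Theorem \ref{Theorem Main Directed}), and attributes majority $4$-colorability of countable digraphs to Bowler, Erde and Pitz \cite{Bowler EP}; three colors remain open even for \emph{finite} digraphs, where it is a conjecture of Kreutzer et al.\ \cite{Kreutzer}. So there is no proof in the paper to compare yours against, and your proposal --- as you yourself acknowledge --- is a plan rather than a proof. It cannot be repaired within its own framework, because its first step already invokes the open finite conjecture as a black box.

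Beyond that, two concrete steps would fail even granting the finite conjecture. First, the greedy coloring of $V_\infty$ is not well defined: at step $i$ almost all out-neighbors of $v_i$ are still uncolored, and many of them lie in $V_f$, whose colors are to be determined only later by the compactness argument; you therefore cannot ``preserve an infinite reserve of uncolored out-neighbors of $v_i$ in each color class,'' since you do not control how that reserve will eventually be colored. (Note also that for a vertex of infinite out-degree the majority condition only requires that the set of good out-edges be infinite whenever the set of bad ones is; this is exactly what the paper's Lemma \ref{Lemma Infinite Sets} is engineered to guarantee via sublist selection, a mechanism your greedy step does not replicate.) Second, the K\"onig-tree argument on $V_f$ requires that every finite induced subdigraph of $D[V_f]$, with the colors imported from $V_\infty$ frozen, admit a compatible majority $3$-coloring; this precolored finite problem is at least as hard as the uncolored one and is not known to be solvable, so the tree may have dead levels and no infinite branch. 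The paper circumvents both difficulties, at the cost of a fourth color, by choosing $2$-element sublists on the infinite-degree part \emph{before} any colors are fixed (Lemma \ref{Lemma Infinite Sets}) and by replacing the precolored finite problem with the weighted list lemma (Lemma \ref{Lemma Bernardi Directed}), which tolerates every possible future coloring of $I$ from those sublists.
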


We prove in Theorem \ref{Theorem Main Directed} that every countable digraph is also majority $4$-choosable. This leads to the following list analog of directed version of the Unfriendly Partition Conjecture.
 
\begin{conjecture}
Every countable digraph is majority $3$-choosable.
\end{conjecture}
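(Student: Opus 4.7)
The statement strengthens two problems that are themselves open: the Kreutzer--Oum--Seymour--van der Zypen--Wood conjecture that every \emph{finite} digraph is majority $3$-colorable \cite{Kreutzer}, and its list analog. Any honest attempt at the countable list version must therefore either bypass or resolve the finite base case, and I see no way to do the former. Accordingly, I would split the work into a finite $3$-choosability statement and a countable lift, and address them in turn.

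For the lift, I would mimic the compactness/enumeration strategy used for Theorem \ref{Theorem Main Directed}. Enumerate $V(D)$ as $v_1,v_2,\ldots$ and let $D_n$ be the sub-digraph on $\{v_1,\ldots,v_n\}$. Assuming a finite majority $3$-choosability theorem, pick for each $n$ a majority list coloring $c_n$ of $D_n$; view each $c_n$ as an element of the compact product space $\prod_v L(v)$ and extract, by a diagonal argument, a subsequence that stabilizes on every finite vertex set, yielding a limit coloring $c \colon V(D) \to \bigcup_v L(v)$. At a vertex $v$ of \emph{finite} out-degree in $D$, once $n$ is large enough that all out-neighbors of $v$ lie in $\{v_1,\ldots,v_n\}$, the majority condition for $c$ at $v$ coincides with that for $c_n$ in $D_n$, so it passes to the limit. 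At a vertex of \emph{infinite} out-degree the majority condition is a cardinality inequality, automatic as long as infinitely many out-neighbors receive a color other than $c(v)$; a small modification of $c$ inside the $3$-element lists, performed along a second enumeration pass, suffices to guarantee this residual requirement.

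The genuine obstacle is the finite case, and here I expect a new idea to be required. Lov\'asz's original argument for finite $2$-coloring minimizes the number of monochromatic edges and flips any vertex violating the majority condition; with lists of size $3$ a single flip can merely shuffle bad edges among three options, and a natural potential function need not strictly decrease. Candidate attacks include: (i) a Lov\'asz Local Lemma argument on the uniformly random list coloring (each vertex drawing uniformly from its $3$-list), which handles digraphs of sufficiently large minimum out-degree; (ii) an entropy-compression argument that iteratively recolors vertices whose bad-out-edge count exceeds the threshold; (iii) a weighted potential such as $\sum_v \max\bigl(0,\mathrm{bad}(v)-\lfloor d^+(v)/2\rfloor\bigr)$ together with a local move of bounded support that strictly decreases it. A proof of the finite base case robust enough to be parametrized over finite subgraphs would, combined with the compactness lift above, close out the conjecture; but until such a base case is in hand, the countable list version is effectively as hard as the directed $3$-coloring conjecture of \cite{Kreutzer}.
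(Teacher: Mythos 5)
The statement you were asked to prove is not a theorem of the paper at all: it appears there as an open \emph{conjecture} (the list analog of the directed Unfriendly Partition Conjecture), and the authors prove only the weaker Theorem \ref{Theorem Main Directed}, that every countable digraph is majority $4$-choosable. Your proposal is, by your own admission, not a proof: it reduces the problem to a finite majority $3$-choosability statement that is itself open (indeed, even majority $3$-\emph{colorability} of finite digraphs is the open conjecture of \cite{Kreutzer}), and then lists candidate attacks without carrying any of them out. So the central gap is simply that no argument is given for the base case on which everything else rests.

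Beyond that, the ``countable lift'' you sketch also has a concrete hole in the mixed case, which is exactly the difficulty the paper's methods are built to handle. After extracting a limit coloring $c$ by compactness, you propose to fix up vertices of infinite out-degree by ``a small modification of $c$ inside the $3$-element lists, performed along a second enumeration pass.'' But recoloring a vertex $u$ changes the bad/good status of every edge $vu$ directed into $u$, and hence perturbs the bad-out-edge counts at the in-neighbors $v$ of $u$; if such a $v$ has finite out-degree, the majority condition you secured for $v$ in the limit can be destroyed, and infinitely many such repairs can cascade. The paper avoids this circularity by reversing the order of quantifiers: it first uses Lemma \ref{Lemma Infinite Sets} to shrink the lists on the infinite-(out-)degree part to $2$-element sublists in such a way that \emph{no} subsequent coloring from those sublists can make the relevant neighborhoods almost monochromatic, and then colors the finite-(out-)degree part via the Bernardi-type Lemma \ref{Lemma Bernardi Directed} so that the resulting coloring is robust against \emph{every} choice on the other side. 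Any serious attempt at the $3$-choosability conjecture would need both a new finite base case and a fix-up scheme with this kind of one-directional robustness; as written, your proposal supplies neither.
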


\section{The results}

\subsection{Undirected graphs}

It is not hard to see that any finite graph is majority $2$-choosable. Indeed, the same argument with a coloring that minimizes the total number of bad edges is sufficient. By compactness it follows that every locally finite graph is majority $2$-choosable. It is also not hard to prove that every countable graph in which all vertices have infinite degree is majority $2$-choosable. So, the main difficulty hides in the mixed case.

Let us call an infinite colored set $A$ \emph{almost monochromatic} if all elements of $A$ have the same color, except a finite number of them. We will need the following two lemmas.

\begin{lemma}\label{Lemma Infinite Sets}
	Let $V$ be a countable set, and let $N_1,N_2,\dots$ be a countable collection of infinite subsets of $V$. Suppose that each element $v\in V$ has assigned a list $L(v)$ of three colors. Then there is a choice of $2$-element sublists $L'(v)\subset L(v)$ such that for every coloring of $V$ from lists $L'(v)$ no set $N_i$ is almost monochromatic.
\end{lemma}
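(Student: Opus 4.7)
\emph{Proof plan.} I would first replace the choice of a $2$-element sublist $L'(v)\subset L(v)$ by the choice of the single ``omitted'' color $f(v)\in L(v)$, setting $L'(v):=L(v)\setminus\{f(v)\}$. Under this translation, a coloring from $L'$ makes $N_i$ almost monochromatic in color $c$ precisely when all but finitely many $v\in N_i$ satisfy $c\in L'(v)$, i.e.\ $c\in L(v)$ and $f(v)\ne c$. So ruling out every such coloring for every $i$ and $c$ is equivalent to demanding that for every index $i$ and every color $c$ the set
\[
\{v\in N_i:c\notin L(v)\}\;\cup\;\{v\in N_i:f(v)=c\}
\]
be infinite. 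Call the pair $(i,c)$ \emph{critical} when the first of these sets is finite, and write $A_{i,c}:=\{v\in N_i:c\in L(v)\}$, which in that case is cofinite in $N_i$, hence infinite. Non-critical pairs impose no constraint on $f$; for each critical pair I need infinitely many $v\in A_{i,c}$ with $f(v)=c$.

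Next, since $V$ is countable the palette $\bigcup_{v\in V}L(v)$ is countable as a countable union of $3$-element sets, and so the family of critical pairs is countable. I would enumerate them as $(i_1,c_1),(i_2,c_2),\ldots$ and build $f$ by a standard diagonal procedure: at stage $n$, for each $k\le n$, choose a vertex $v_{n,k}\in A_{i_k,c_k}$ that has not been assigned at any earlier step and set $f(v_{n,k}):=c_k$. Such a fresh $v_{n,k}$ exists because $A_{i_k,c_k}$ is infinite while only finitely many vertices have been assigned so far. After the diagonal is complete, extend $f$ arbitrarily (with $f(v)\in L(v)$) on the remaining vertices and define $L'(v):=L(v)\setminus\{f(v)\}$.

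For verification, each critical pair $(i_k,c_k)$ is served at every stage $n\ge k$, yielding infinitely many distinct $v_{n,k}\in A_{i_k,c_k}$ with $f(v_{n,k})=c_k$; this is exactly the condition the reformulation demanded, while non-critical pairs are handled for free. Hence no $N_i$ is almost monochromatic under any coloring drawn from the sublists $L'(v)$. I do not foresee a genuine obstacle: the only delicate move is recognising the correct equivalent condition on $f$ and isolating the notion of critical pair, after which the construction reduces to the familiar back-and-forth used to satisfy countably many ``attain this infinitely often'' demands, each one costing only finitely many additional assignments per stage.
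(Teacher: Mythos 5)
Your proposal is correct, and it proves the lemma by a genuinely different mechanism than the paper. The paper runs a back-and-forth schedule over the sets $N_i$ themselves and, on each visit to $N_i$, assigns to a fresh vertex a $2$-element pair chosen so that the last three pairs placed inside $N_i$ have no common color; any coloring must then use at least two colors on each such triple, so each $N_i$ sees two colors infinitely often (or infinitely many colors) and cannot be almost monochromatic. You instead dualize the choice of a sublist to the choice of the omitted color $f(v)$, and your reformulation is exactly right: $c\notin L'(v)$ iff $c\notin L(v)$ or $f(v)=c$, and a color $c$ can dominate $N_i$ under some coloring from $L'$ precisely when $\{v\in N_i: c\notin L'(v)\}$ is finite, so the whole problem reduces to the countably many critical pairs $(i,c)$. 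The diagonalization then goes through because each $A_{i,c}$ is infinite while only finitely many vertices are committed at any stage, and distinct requirements never conflict since each vertex is assigned at most once. Your route yields a slightly stronger, coloring-independent guarantee (within each $N_i$, every color is excluded from infinitely many of the chosen sublists) and works verbatim for lists of any size at least two, giving $(m:m-1)$-type statements; the paper's triple-intersection trick is tailored to lists of size exactly three but avoids the reformulation and the per-color bookkeeping. Either argument supports the applications in Corollary \ref{Corollary Graphs} and Theorem \ref{Theorem Main}.
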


\begin{proof}
	Let $V=\{v_1,v_2,\dots\}$ be any numeration of the elements of the set $V$. Then each set $N_i$ is also linearly ordered by this numeration. To get a desired choice of sublists $L'(v_i)\subset L(v_i)$ we will use the "back-and-forth" argument, similar to the one asserting that the set of rationals is countable.
	
	Consider the sequence $$S=1,1,2,1,2,3,1,2,3,4,\dots=B_1B_2B_3\dots$$ consisting of finite blocks $B_j=1,2,\dots,j$ of initial positive integers. Let $s_i$ denote the $i$-th term of the sequence $S$. We will be choosing sublists $L'(v_i)$ consisting of color pairs in consecutive steps, accordingly to the sequence $S$.
	
	In the first step we enter the first set $N_1$ and choose any pair of colors from the list of the first element in $N_1$. In every subsequent step $s_i$ we enter the set $N_{s_i}$ and choose a pair of colors $P$ for the first available element so that: (1) $P$ is different than the pair $Q$ chosen the last time we have entered $N_{s_i}$, and (2) the three pairs, $P,Q$, and the pair $R$ chosen in the penultimate visit in $N_{s_i}$, have no common color. Such choice is always possible from lists of size three.
	
	Notice that after each step the number of elements in each set $N_i$ with assigned pairs of colors is finite. Also, the procedure based on the sequence $S$ guarantees that each set $N_i$ is visited infinitely many times. Hence, for each set $N_i$ the number of triples of pairs $P,Q,R$ satisfying the two properties (1) and (2) is infinite. This means that in any coloring of $V$ form lists $L'$, each set $N_i$ either contains infinitely many colors, or there exist at least two colors occurring infinitely many times in $N_i$. In both cases the set $N_i$ cannot be almost monochromatic. This proves the lemma.
	\end{proof}

Recall that a graph $G$ is \emph{$(m:k)$-choosable} if for every assignment of lists of size $m$ to the vertices of $G$ there is a choice of sublists of size $k$ such that every coloring from these sublists is proper. A similar notion can be defined analogously for majority coloring. By the above lemma we get immediately the following result.

\begin{corollary}\label{Corollary Graphs}
Every countable graph with all vertices of infinite degree is majority $(3:2)$-choosable.
\end{corollary}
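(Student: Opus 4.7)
The plan is to deduce this corollary directly from Lemma \ref{Lemma Infinite Sets} by taking the vertex neighborhoods of $G$ as the infinite sets supplied to the lemma. First I would fix an enumeration $V(G) = \{v_1, v_2, \dots\}$ and, for each $i$, set $N_i = N_G(v_i)$. Since every vertex of $G$ has infinite degree, each $N_i$ is an infinite subset of $V(G)$, so the hypotheses of Lemma \ref{Lemma Infinite Sets} apply to the given size-$3$ lists $L(v)$. Invoking the lemma yields $2$-element sublists $L'(v) \subset L(v)$ such that, in any coloring of $V(G)$ from $L'$, no neighborhood $N_i$ is almost monochromatic.

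Next I would verify that every such coloring $c$ is automatically a majority coloring. Fix a vertex $v_i$ and split its incident edges into bad ones (neighbors $u$ with $c(u) = c(v_i)$) and good ones (neighbors $u$ with $c(u) \neq c(v_i)$); these two sets partition the countable set $N_i$. Because $|N_i| = \aleph_0$ and every subset of a countable set is at most countable, the majority condition $|\mathrm{bad}| \leq |\mathrm{good}|$ (as cardinals) can fail in exactly one way: when the set of good edges at $v_i$ is finite while the set of bad edges is infinite. But this is precisely the statement that $N_i$ is almost monochromatic with color $c(v_i)$, contradicting the defining property of $L'$ provided by Lemma \ref{Lemma Infinite Sets}.

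I do not foresee a real obstacle here; essentially all the content sits in Lemma \ref{Lemma Infinite Sets}, and the corollary is the natural reformulation once one observes that in the infinite-degree setting, failure of the majority condition at $v$ collapses to almost-monochromaticity of $N_G(v)$ in the single color $c(v)$. The one point that deserves a brief comment is that Lemma \ref{Lemma Infinite Sets} in fact rules out almost-monochromaticity in \emph{every} color, which is strictly stronger than what the argument requires (namely, avoidance of the color $c(v_i)$ at each $v_i$); this stronger conclusion is of course more than sufficient.
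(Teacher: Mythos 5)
Your proposal is correct and follows essentially the same route as the paper: apply Lemma \ref{Lemma Infinite Sets} to the neighborhoods $N_i = N(v_i)$ to obtain the sublists $L'$, and observe that since each $N_i$ is infinite, a failure of the majority condition at $v_i$ would force $N_i$ to be almost monochromatic in the color $c(v_i)$, which the lemma excludes. Your remark that the lemma rules out almost-monochromaticity in every color, which is stronger than needed, is accurate but does not change the argument.
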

\begin{proof}
	Let $G$ be a countable graph on the set $V=\{v_1,v_2,\dots\}$ with color lists $L(v_i)$, each of size three. Let $N_i=N(v_i)$ be the set of all neighbors of the vertex $v_i$ in $G$. Let $L'(v_i)\subset L(v_i)$ be any choice of $2$-element sublists satisfying the assertion of Lemma \ref{Lemma Infinite Sets}. Finally, let $c$ be any coloring from lists $L'(v_i)$. Then in each neighborhood $N_i$ there exist infinitely many vertices whose colors are different than $c(v_i)$. Thus, for every vertex $v_i$, the number of good edges incident to $v_i$ is infinite, which means that $c$ is a majority coloring.
	\end{proof}

The following lemma is crucial for our main result. It was inspired by the result of Bernardi \cite{Bernardi} generalizing the theorem of Lov\'{a}sz \cite{Lovasz}.

\begin{lemma}\label{Lemma List Bernardi}
	Let $G$ be a countable locally finite graph. Suppose that $V(G)=F\cup I$, where $I$ is an independent set in $G$. Suppose that each vertex $v\in F$ is assigned a list $L(v)$ of four colors and each color $x\in L(v)$ is assigned a real number $r_{v}(x)$. Assume that for every vertex $v\in F$ we have: 
	\begin{equation}\sum_{x\in L(v)}r_{v}(x) \geqslant 2\deg(v).
	\end{equation}
	Assume further that every vertex $u\in I$ is assigned a list $L'(u)$ of two colors. Then there is a coloring $c$ of $F$ from lists $L(v)$ such that for every coloring $c'$ of $I$ from lists $L'(u)$, the number of bad edges incident to any vertex $v\in F$ is at most $r_{v}(c(v))$.
\end{lemma}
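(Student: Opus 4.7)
The plan is to reduce the statement to a finite extremal argument and then pass to the countable locally finite setting by compactness. First I would reformulate the conclusion. For each $v\in F$ and $x\in L(v)$ set
\[
d_v^I(x):=|\{u\in I\cap N(v):x\in L'(u)\}|,
\]
and for a coloring $c$ of $F$ set $b_v(c):=|\{w\in F\cap N(v):c(w)=c(v)\}|$. The worst-case $c'$ colors each $u\in I\cap N(v)$ with $c(v)$ whenever $c(v)\in L'(u)$, so the supremum over $c'$ of the bad-edge count at $v$ equals $B_v(c):=b_v(c)+d_v^I(c(v))$. Hence the lemma is equivalent to producing $c$ from the lists $L$ with $B_v(c)\le r_v(c(v))$ for every $v\in F$.

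For the case in which $F$ and $I$ are both finite, let $M(c)$ denote the number of monochromatic edges of $G[F]$ and consider the potential
\[
\Phi(c):=2M(c)+\sum_{v\in F}\bigl(d_v^I(c(v))-r_v(c(v))\bigr).
\]
Take $c$ minimizing $\Phi$ over all colorings from $L$. If some $v\in F$ had $B_v(c)>r_v(c(v))$ with current color $x=c(v)$, then recoloring $v$ to $y\in L(v)$ changes $\Phi$ by
\[
\Delta\Phi=\bigl(2b_v^y+d_v^I(y)-r_v(y)\bigr)-\bigl(2b_v^x+d_v^I(x)-r_v(x)\bigr),
\]
where $b_v^y:=|\{w\in F\cap N(v):c(w)=y\}|$. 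Summing over $y\in L(v)$,
\[
\sum_{y\in L(v)}\bigl(2b_v^y+d_v^I(y)-r_v(y)\bigr)\le 2|F\cap N(v)|+2|I\cap N(v)|-2\deg(v)=0,
\]
using $\sum_y b_v^y\le|F\cap N(v)|$, $\sum_y d_v^I(y)=\sum_{u\in I\cap N(v)}|L'(u)\cap L(v)|\le 2|I\cap N(v)|$ (this is where $|L'(u)|=2$ enters), and the hypothesis $\sum_y r_v(y)\ge 2\deg(v)$. The value at $y=x$ equals $b_v^x+(B_v(c)-r_v(c(v)))>b_v^x\ge 0$, so some $y\ne x$ beats $x$ and yields $\Delta\Phi<0$, contradicting minimality.

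For the countable locally finite case I would run a compactness argument in $\mathcal{C}:=\prod_{v\in F}L(v)$, which is compact in the product topology because each $L(v)$ is finite. For each finite $F_0\subset F$ let $F_1:=F_0\cup(F\cap N(F_0))$ and $I_1:=I\cap N(F_1)$, both finite by local finiteness. Applying the finite case to the induced subgraph on $F_1\cup I_1$ with the given lists and weights---the hypothesis persists because $\sum_x r_v(x)\ge 2\deg_G(v)\ge 2\deg_{G[F_1\cup I_1]}(v)$---yields a coloring of $F_1$ which, extended arbitrarily from $L$ to the rest of $F$, lies in
\[
\mathcal{A}(F_0):=\{c\in\mathcal{C}:B_v(c)\le r_v(c(v))\text{ for all }v\in F_0\};
\]
the transfer back to $G$ works because for $v\in F_0$ every $G$-neighbor of $v$ already lies in $F_1\cup I_1$. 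Since $B_v(c)$ depends on only finitely many coordinates of $c$, $\mathcal{A}(F_0)$ is clopen, and $\mathcal{A}(F_0)\cap\mathcal{A}(F_0')\supseteq\mathcal{A}(F_0\cup F_0')$, so the family enjoys the finite intersection property and $\bigcap_{F_0}\mathcal{A}(F_0)\ne\emptyset$; any point of this intersection is the coloring demanded by the lemma. The hard step will be pinning down the right potential $\Phi$: the coefficient $2$ on $M(c)$ is precisely what makes the averaging cancel against the hypothesis $\sum_x r_v(x)\ge 2\deg(v)$ together with $|L'(u)|=2$, and this single display is where both numerical constants of the lemma are actually used.
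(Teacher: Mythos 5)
Your proposal is correct and follows essentially the same route as the paper: minimize a potential combining the monochromatic edges of $G[F]$, the worst-case contribution $d_v^I(c(v))$ from $I$, and the weights $r_v$, derive a contradiction by an averaging/recoloring step that uses exactly $|L'(u)|=2$ and $\sum_x r_v(x)\ge 2\deg(v)$, then pass to the countable case by compactness. The only differences are cosmetic (your potential weights the monochromatic-edge term by $2$ instead of $1$, which works equally well, and you spell out the compactness step that the paper leaves as ``standard'').
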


\begin{proof}
	First we prove the assertion for finite graphs $G$. For every vertex $v\in F$ and for every color $x\in L(v)$, let $m_v(x)$ be the number of neighbors of $v$ in $I$ containing color $x$ in their lists: $$m_v(x)=\left|\{u\in I\cap N(v): x\in L'(u)\}\right|.$$ For a fixed coloring $c$ of the set $F$ from lists $L(v)$, let $B_c$ denote the total number of bad edges in the induced subgraph $G[F]$. Let $M_c=\sum_{v\in F}m_v(c(v))$.
	
	Consider now a coloring $c$ of $F$ that minimizes the following quantity:
	\begin{equation}
	B_c+M_c-\frac{1}{2}\sum_{v\in V(G)}\left(r_{v}(c(v))-\sum_{x\in L(v)\setminus \{c(v)\}} r_v(x)\right).
	\end{equation}
	We claim that this coloring $c$ satisfies the assertion of the theorem.
	
	Suppose, on the contrary, that there is a coloring $c'$ of $I$ from lists $L'(u)$ and some vertex $v\in F$ that violates the stated condition. Let $L(v)=\{a,b,c,d\}$ and suppose that $c(v)=a$. Let $n^{F}_v(x)$ denote the number of neighbors of $v$ in $F$ colored by $x$ in coloring $c$, and let $n^{I}_v(x)$ be the number of neighbors of $v$ in $I$ colored with $x$ in coloring $c'$. Let $$n_v(a)=n^F_v(a)+n^I_v(a)$$ be the total number of bad edges incident to $v$. So, by our contrary assumption this number satisfies $$n_v(a)>r_v(a).$$ Notice that $$m_v(a)\geqslant n^I_v(a),$$ hence we have also that $$n^F_v(a)+m_v(a)>r_v(a)$$ in coloring $c$. Furthermore, since lists $L'(u)$ have size two, we have $$\sum_{x\in L(v)}\left(n^F_v(x)+m_v(x)\right)\leqslant 2\deg(v).$$ By condition (2.1) it follows that there must be a color in $L(v)$, say $b$, such that $$n^F_v(b)+m_v(b)<r_v(b).$$
	
	We claim that switching the color $a$ to $b$ at vertex $v$ gives a new coloring with strictly smaller value of expression (2.2). Indeed, after switching these two colors the number
	\begin{equation}
	n^F_v(a)+m_v(a)-\frac{1}{2}(r_{v}(a)-r_v(b)-r_v(c)-r_v(d))
	\end{equation}
	becomes equal to
	\begin{equation}
	n^F_v(b)+m_v(b)-\frac{1}{2}(r_{v}(b)-r_v(a)-r_v(c)-r_v(d)),
	\end{equation}
	and this is the only change in this expression. Substituting $r_v(a)$ for $n^F_v(a)+m_v(a)$ in (2.3) and $r_v(b)$ for $n^F_v(b)+m_v(b)$ in (2.4) gives the same value equal to $$\frac{1}{2}(r_v(a)+r_v(b)+r_v(c)+r_v(d)).$$ This means that the number (2.3) was strictly bigger than the number (2.4), a contradiction with the minimality of (2.2).
	
	The assertion for infinite locally finite graphs follows by the standard compactness argument. The proof is therefore complete.
\end{proof}

We are now ready to prove our main result.

\begin{theorem}\label{Theorem Main}
Every countable graph is majority $4$-choosable.
\end{theorem}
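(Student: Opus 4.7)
The plan is to combine Lemma~\ref{Lemma Infinite Sets} with a finite-graph switching argument via Tychonoff compactness. I would partition $V(G)=F\cup I$, where $F=\{v:\deg_G(v)<\infty\}$ and $I=V(G)\setminus F$. After restricting each $4$-list $L(v)$ to an arbitrary $3$-element sublist $L_3(v)$, I would apply Lemma~\ref{Lemma Infinite Sets} to the countable set $V(G)$ with lists $L_3(v)$ and the countable family of infinite sets $\{N(u):u\in I\}$, obtaining $2$-element sublists $L'(v)\subset L_3(v)\subset L(v)$ for which every coloring of $V(G)$ from $L'$ leaves no $N(u)$ almost monochromatic. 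In any such coloring $c$, each $u\in I$ has $\aleph_0$ neighbors receiving a color different from $c(u)$, so the majority condition at $u$ is automatic.

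To handle the finite-degree vertices, for each finite $F'\subset F$ I would form $V'=F'\cup\bigcup_{v\in F'}N(v)$, which is finite because each $v\in F'$ has finite degree, and apply the classical Lov\'asz switching argument to the finite graph $G[V']$ with the $2$-lists $L'$. A coloring minimizing the total number of monochromatic edges of $G[V']$ satisfies the majority condition at every vertex of $V'$ inside $G[V']$, since flipping any majority-violating vertex to the other color in its $2$-list strictly decreases the total count. Because $V'\supseteq N(v)$ for $v\in F'$, this is majority at every $v\in F'$ in $G$. Extending arbitrarily from $L'$ to $V(G)\setminus V'$ shows that the clopen sets $C_v=\{c\in\prod_{w}L'(w):c\text{ is majority at }v\text{ in }G\}$, for $v\in F$, have the finite intersection property; by Tychonoff compactness of the countable product $\prod_{w}L'(w)$, the intersection $\bigcap_{v\in F}C_v$ is nonempty, and any coloring in it is majority at every vertex of $G$.

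The main obstacle is to pin down the $2$-sublists $L'$ \emph{once and for all} so that the same $L'$ simultaneously supports the finite-graph minimization on every slice $G[V']$ and forces the non-almost-monochromatic guarantee at every infinite-degree vertex. This is precisely the content of Lemma~\ref{Lemma Infinite Sets}, whose conclusion is universally quantified over colorings from $L'$; consequently the particular coloring selected by the compactness argument automatically inherits the infinite-degree majority property, and no further coordination between the $F$-side and $I$-side is needed. In place of the classical Lov\'asz switching step one can equivalently invoke Lemma~\ref{Lemma List Bernardi} on each finite slice with uniform weights $r_v(x)=\deg_G(v)/2$, which is the weighted route that adapts cleanly to the directed setting of Theorem~\ref{Theorem Main Directed}.
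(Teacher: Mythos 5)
Your argument is correct, but it is genuinely different from the one in the paper, and it appears to prove more than the stated theorem. The paper applies Lemma \ref{Lemma Infinite Sets} only inside $I$, to the sets $N[u]\cap I$ for those $u$ with infinitely many infinite-degree neighbours; the vertices of $I$ whose neighbourhoods live mostly in $F$ are handled adaptively at the very end, and it is precisely this adaptive last stage that forces the coloring of $F$ to be \emph{robust} against every future coloring of $I$ from the $2$-lists. That robustness is supplied by Lemma \ref{Lemma List Bernardi}, whose hypothesis $\sum_{x\in L(v)}r_v(x)\geqslant 2\deg(v)$ with $r_v(x)=\deg(v)/2$ is exactly what consumes the fourth color. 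You instead feed Lemma \ref{Lemma Infinite Sets} the full neighbourhoods $N(u)$, $u\in I$, as subsets of all of $V(G)$; since its conclusion is universally quantified over colorings from $L'$, every infinite-degree vertex is disposed of at once, non-adaptively, and the finite-degree vertices can then be handled by the plain $2$-list Lov\'asz minimization on finite slices plus compactness, with no robustness requirement at all. The price of the paper's route is one extra color in the undirected case; what it buys is uniformity with the directed case, where finite digraphs are not known to be majority $2$-choosable and the Bernardi-type lemma is unavoidable. Note that your argument only ever uses three colors from each list, so if written up carefully it would yield majority $3$-choosability of countable graphs, strictly stronger than Theorem \ref{Theorem Main}; you should re-examine it with that in mind before claiming it. One side remark in your last paragraph is wrong, though: you cannot ``equivalently invoke Lemma \ref{Lemma List Bernardi} on each finite slice with $r_v(x)=\deg(v)/2$'' after the lists have been cut to size two, since then $\sum_{x\in L'(v)}r_v(x)=\deg(v)<2\deg(v)$ and condition (2.1) fails; for the same reason your route does not transfer to Theorem \ref{Theorem Main Directed}, where one must keep the full $4$-lists on $F$ and therefore cannot run Lemma \ref{Lemma Infinite Sets} over all of $V$.
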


\begin{proof}
	Let $G$ be a graph on a countable set of vertices $V$. For each vertex $v\in V$, let $N(v)$ denote the set of all neighbors of $v$ in $G$. Let $N[v]=N(v)\cup \{v\}$ be the closed neighborhood of the vertex $v$.
	
	First split the set $V$ into two parts: $F$---the set of all vertices with finite degree, and $I$---the set of all vertices with infinite degree. Further, let $A\subseteq I$ be the set consisting of all vertices $v\in I$ for which the set $N(v)\cap I$ is infinite. Let $B\subseteq I$ be the set of the remaining vertices of $I$. Notice, that for each vertex $v\in B$ the set $N(v)\cap F$ is infinite.
	
	Assume that each vertex $v\in F$ has assigned a list $L(v)$ of four colors, while each vertex $u\in I$ is assigned with a list $L(u)$ of three colors. We shall color the graph $G$ in three stages.
	
	\textbf{Stage 1.} We apply Lemma \ref{Lemma Infinite Sets} to the set $I$ and the family of infinite subsets $N[u]\cap I$, with $u\in A$. So, for each vertex $u\in I$ we choose a $2$-color sublist $L'(u)\subset L(u)$ such that no set $N[u]\cap I$ is almost monochromatic in any coloring from lists $L'$. Thus, the majority condition is already guaranteed for each vertex $u\in A$.
	
	\textbf{Stage 2.} We will color the subgraph $G[F]$ of $G$ induced by the set $F$ by using Lemma \ref{Lemma List Bernardi} with $r_v(x)=\frac{1}{2}\deg(v)$ for every vertex $v\in F$ and every color $x\in L(v)$. Hence we get a coloring $c$ from lists $L(v)$ which will satisfy the majority condition for any future coloring of the set $I$ from lists $L'$. 
	
	\textbf{Stage 3.} We color the set $I$ from lists $L'(u)$. Let $u$ be an arbitrary vertex in $I$. If $u\in A$ then we color $u$ by any color from its list since the majority condition has been already guaranteed in Stage 1. If $u\in B$ and the set $F\cap N(u)$ is not almost monochromatic, then we color $u$ by any color from its list $L'(u)$. If $u\in B$ and $F\cap N(u)$ is almost monochromatic, we choose for $u$ the opposite color from the list $L'(u)$, and the majority condition is satisfied for $u$, too.

 The proof is complete.
\end{proof}
\subsection{Directed graphs}
First notice that Lemma \ref{Lemma Infinite Sets} can be easily applied in the directed case. In particular, we get immediately the following analog of Corollary \ref{Corollary Graphs}.

\begin{corollary}\label{Corollary Directed}
Every countable digraph with all vertices of infinite out-degree is $(3:2)$-choosable.
\end{corollary}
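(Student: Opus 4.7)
My plan is to mirror the proof of Corollary~\ref{Corollary Graphs} almost verbatim, replacing the undirected neighborhood $N(v)$ with the out-neighborhood $N^+(v)$. The hypothesis of infinite out-degree is exactly what is needed to feed the family of out-neighborhoods into Lemma~\ref{Lemma Infinite Sets}.

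First I would enumerate the vertex set as $V=\{v_1,v_2,\dots\}$ and set $N_i:=N^+(v_i)$; by assumption each $N_i$ is an infinite subset of $V$. I would then invoke Lemma~\ref{Lemma Infinite Sets} on the countable family $\{N_i\}_{i\geqslant 1}$ to obtain $2$-element sublists $L'(v_i)\subset L(v_i)$ with the property that, in any coloring of $V$ from $L'$, no $N_i$ is almost monochromatic.

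Next, for an arbitrary coloring $c$ of $V$ from the sublists $L'$, I would verify the directed majority condition at every vertex $v_i$. The claim is that $N_i$ contains infinitely many vertices $u$ with $c(u)\neq c(v_i)$: if only finitely many such $u$ existed, then all but finitely many elements of $N_i$ would carry the color $c(v_i)$, making $N_i$ almost monochromatic and contradicting the conclusion of Lemma~\ref{Lemma Infinite Sets}. Hence the set of good out-edges from $v_i$ has cardinality $\aleph_0$, which is at least the cardinality of the set of bad out-edges from $v_i$; this is exactly the majority condition for the digraph.

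I do not anticipate any real obstacle, since the substitution of $N^+$ for $N$ is purely cosmetic and the key combinatorial step—non-almost-monochromaticity forcing infinitely many vertices of any prescribed complementary color—transfers unchanged from the undirected argument. The corollary is therefore an essentially immediate consequence of Lemma~\ref{Lemma Infinite Sets}, in complete parallel with Corollary~\ref{Corollary Graphs}.
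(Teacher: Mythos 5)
Your proposal is correct and matches the paper's intended argument exactly: the paper gives no separate proof, stating only that Lemma~\ref{Lemma Infinite Sets} applies immediately in the directed case, and your substitution of $N^{+}(v_i)$ for $N(v_i)$ in the proof of Corollary~\ref{Corollary Graphs} is precisely that application. Nothing further is needed.
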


The directed analog of Lemma \ref{Lemma List Bernardi} is slightly more problematic. Curiously, the statement is literally the same, though the proof is different. A digraph is \emph{locally finite} if all its vertices have finite out-degree.
\begin{lemma}\label{Lemma Bernardi Directed}
	Let $D$ be a countable locally finite digraph. Suppose that $V(D)=F\cup I$, where $I$ is an independent set in $D$. Suppose that each vertex $v\in F$ is assigned a list $L(v)$ of four colors and each color $x\in L(v)$ is assigned a real number $r_{v}(x)$. Assume that for every vertex $v\in F$ we have: 
\begin{equation}\sum_{x\in L(v)}r_{v}(x) \geqslant 2\deg^+(v).
\end{equation}
Assume further that every vertex $u\in I$ is assigned a list $L'(u)$ of two colors. Then there is a coloring $c$ of $F$ from lists $L(v)$ such that the for every coloring $c'$ of $I$ from lists $L'(u)$, the number of bad edges out-going from any vertex $v\in F$ is at most $r_{v}(c(v))$.
\end{lemma}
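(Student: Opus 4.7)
Mimic the structure of the proof of Lemma~\ref{Lemma List Bernardi}: first reduce to the finite case by compactness, and then use a potential-function argument on finite digraphs. Since $D$ is locally finite in the directed sense, the constraint that ``the number of bad out-edges from $v$ is at most $r_{v}(c(v))$ for every colouring $c'$ of $I$'' depends only on $c(v)$ and on the colours of the finitely many out-neighbours of $v$ in $F$, hence is a clopen condition in the compact product space $\prod_{v\in F} L(v)$. A standard K\"{o}nig/Tychonoff argument, as at the end of the proof of Lemma~\ref{Lemma List Bernardi}, therefore reduces everything to the case when $D$ is finite.

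In the finite case, the natural attempt is to minimise the directed analogue of the potential used for Lemma~\ref{Lemma List Bernardi}, namely $\Phi(c) = B_{c} + M_{c} - \frac{1}{2}\sum_{v\in F}\bigl(r_{v}(c(v)) - \sum_{x\in L(v)\setminus\{c(v)\}} r_{v}(x)\bigr)$, where now $B_{c}$ is the number of bad directed edges inside $D[F]$ and $M_{c}=\sum_{v\in F} m_{v}(c(v))$. If $c$ is a minimiser and some $v\in F$ with $c(v)=a$ violates the conclusion, then the inequality $\sum_{x\in L(v)}(n^F_{v}(x)+m_{v}(x)) \leqslant 2\deg^+(v)$ (each out-neighbour in $F$ contributes once to the $n^F_v(x)$'s and each out-neighbour in $I$ contributes to exactly two of the $m_v(x)$'s because $|L'(u)|=2$) combined with the hypothesis yields, by averaging over the four colours of $L(v)$, some $b\in L(v)$ with $n^F_{v}(b)+m_{v}(b)<r_{v}(b)$; one then wants to show that swapping $a$ for $b$ at $v$ strictly decreases $\Phi$, contradicting minimality.

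The main obstacle will be exactly this last step. In the undirected setting the factor $\frac{1}{2}$ in the potential absorbs the fact that each edge contributes to $n^F$ at both of its endpoints. In a digraph the edge $(u,w)$ contributes only to $n^F_{u}$ and not to $n^F_{w}$, so switching $c(v)$ from $a$ to $b$ introduces additional changes in $B_{c}$ of $+1$ per in-neighbour $w\in N^-(v)\cap F$ with $c(w)=b$ and $-1$ per such $w$ with $c(w)=a$, and these extra terms are not cancelled by any uniform rescaling of $\Phi$. I would try to overcome this either by refining the potential---for instance by inserting an orientation-dependent weight on the directed edges of $D[F]$ so that the in-neighbour contributions telescope---or by replacing the single-vertex switch by a small coordinated recolouring of $v$ together with one of its in-neighbours, using the slack afforded by the three candidate colours in $L(v)\setminus\{a\}$ to find one compatible with the in-neighbourhood colour pattern. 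Once a strict decrease of $\Phi$ under any violation is secured, the finite case is complete and the countable case follows by the compactness argument above.
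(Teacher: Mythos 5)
Your compactness reduction and the derivation of a colour $b$ with $n^F_v(b)+m_v(b)<r_v(b)$ are fine, but the finite case --- which is the entire content of the lemma --- is not actually proved. You correctly identify the obstruction: switching $c(v)$ from $a$ to $b$ changes the bad/good status of every edge $(w,v)$ with $w\in N^-(v)\cap F$, contributing $+\#\{w: c(w)=b\}-\#\{w: c(w)=a\}$ to $B_c$, and this term can be arbitrarily large and positive, so the potential need not decrease. But your two proposed remedies (an orientation-dependent reweighting of the potential, or a coordinated recolouring of $v$ with an in-neighbour) are only sketched as directions to ``try,'' not carried out, and neither is likely to close easily: the failure of single-vertex switching is exactly why majority colourability of finite digraphs required genuinely different arguments in \cite{Kreutzer} and \cite{Anholcer} rather than a Lov\'{a}sz-style minimisation. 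As written, the proof has a hole at its central step.

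The paper avoids the potential function altogether. It proves the finite case by induction on $|I|$: pick $u\in I$ with $L'(u)=\{a,b\}$, delete $u$, and for every in-neighbour $v$ of $u$ replace $r_v(a),r_v(b)$ by $r_v(a)-1,r_v(b)-1$. Since $\deg^+_{D'}(v)=\deg^+(v)-1$ for such $v$, condition (2.5) is preserved, and the colouring obtained by induction remains valid after restoring $u$ because $u$ can only be coloured $a$ or $b$. The base case $I=\varnothing$ is precisely the main theorem of \cite{Anholcer} (majority $4$-choosability of finite digraphs), which is invoked as a black box. To repair your argument you should either cite that result and run this peeling induction, or supply an independent proof of the $I=\varnothing$ case; the minimisation of $\Phi$ cannot simply be transplanted from the undirected setting.
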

\begin{proof}
	We prove the lemma for finite digraphs (the infinite case follows by compactness). The proof is by induction on the size of the independent set $I$. If $I$ is empty, then the statement of the lemma coincides with the main result in \cite{Anholcer}.
	
	For the inductive step, let $u$ be any vertex in $I$ with the list $L'(u)=\{a,b\}$, and let $D'$ be a digraph $D$ with vertex $u$ deleted. Moreover, if $v$ is any in-coming neighbor of $u$, then we put $r'_v(a)=r_v(a)-1$ and $r'_v(b)=r_v(b)-1$. In all other cases we keep $r'_v(x)=r_v(x)$. Notice that, by condition (2.5) for the digraph $D$, we may write $$\sum_{x\in L(v)}r'_{v}(x) \geqslant \sum_{x\in L(v)}r_{v}(x)-2\geqslant2(\deg^+(v)-1)\geqslant2\deg^+_{D'}(v).$$ So, condition (2.5) is also satisfied for $D'$ and we can apply induction to get a coloring $c$ satisfying the stated condition for the digraph $D'$ with numbers $r'_v(x)$. However, the same coloring $c$ is valid for $D$ after restoring the vertex $u$, because the only colors that may be used for $u$ are the colors $a$ and $b$. This completes the proof.
	\end{proof}

Using Lemma \ref{Lemma Infinite Sets} and Lemma \ref{Lemma Bernardi Directed} one may now prove our second main result in much the same way as in the undirected case.
\begin{theorem}\label{Theorem Main Directed}
	Every countable directed graph is majority $4$-choosable.
\end{theorem}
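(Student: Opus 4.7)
The plan is to imitate the three-stage proof of Theorem \ref{Theorem Main}, with out-degree replacing degree throughout and Lemma \ref{Lemma Bernardi Directed} replacing Lemma \ref{Lemma List Bernardi}. Let $F$ be the set of vertices of $D$ with finite out-degree and $I$ the remainder. Split $I=A\cup B$, where $A=\{v\in I : |N^+(v)\cap I|=\infty\}$ and $B=I\setminus A$; note that each $v\in B$ then satisfies $|N^+(v)\cap F|=\infty$. For every $v\in I$ I fix once and for all an arbitrary three-element sublist of $L(v)$, still denoted $L(v)$, to enable the use of Lemma \ref{Lemma Infinite Sets}.

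In Stage 1, I apply Lemma \ref{Lemma Infinite Sets} to the countable set $I$ together with the countable family $\{N^+[u]\cap I : u\in A\}$ of infinite subsets, obtaining $2$-element sublists $L'(u)\subset L(u)$ for each $u\in I$ such that no $N^+[u]\cap I$ is almost monochromatic in any $L'$-coloring. This already secures the majority condition at every $u\in A$: whatever color $u$ is eventually assigned, infinitely many of its out-neighbors in $I$ receive different colors, producing infinitely many good out-arcs, which dominate the countably many possible bad ones. In Stage 2, I form $D_0$ from $D$ by deleting every arc whose tail lies in $I$. Then $D_0$ is countable and locally finite, $I$ is independent in $D_0$, and the out-degree of every $v\in F$ is the same in $D$ and $D_0$. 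Setting $r_v(x)=\frac{1}{2}\deg^+(v)$ for all $v\in F$ and $x\in L(v)$ turns condition~(2.5) into an equality, so Lemma \ref{Lemma Bernardi Directed} yields a coloring $c$ of $F$ from the lists $L$ such that, for any subsequent coloring of $I$ from the sublists $L'$, every $v\in F$ has at most $\frac{1}{2}\deg^+(v)$ bad out-arcs; this is exactly the majority condition at each vertex of $F$.

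In Stage 3, I color $I$ from $L'$. A vertex $u\in A$ is assigned any color of $L'(u)$, since Stage 1 already forces its majority condition. For $u\in B$, the bad out-arcs into $I$ are confined to the finite set $N^+(u)\cap I$, so it suffices to create infinitely many good out-arcs into $F$: if the $c$-coloring of $N^+(u)\cap F$ is not almost monochromatic, any color of $L'(u)$ works; otherwise, with almost-dominant color $x$, pick the color of $L'(u)\setminus\{x\}$ (or either color of $L'(u)$ if $x\notin L'(u)$). Either way, $u$ ends up with infinitely many good out-arcs and the majority condition holds there as well. The main technical nuisance, and essentially the only place where the directed argument deviates from the undirected one, is that $D$ itself is not locally finite, so Lemma \ref{Lemma Bernardi Directed} cannot be applied to $D$ directly; passing to $D_0$ circumvents this while faithfully preserving every out-arc from an $F$-vertex.
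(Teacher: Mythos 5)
Your proof is correct and follows exactly the three-stage scheme that the paper itself prescribes for Theorem \ref{Theorem Main Directed} (and leaves to the reader): split by finite versus infinite out-degree, apply Lemma \ref{Lemma Infinite Sets} to the sets $N^+[u]\cap I$ for $u\in A$, color $F$ via Lemma \ref{Lemma Bernardi Directed} with $r_v(x)=\frac{1}{2}\deg^+(v)$, and finish on $B$ using the almost-monochromatic dichotomy. Your explicit passage to $D_0$ to meet the local-finiteness and independence hypotheses of Lemma \ref{Lemma Bernardi Directed} is a welcome clarification of a point the paper glosses over.
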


It is enough to split the digraph into two parts accordingly to whether the out-degree of a given vertex is finite or infinite, and then color the resulting sets in three stages, like in the proof of Theorem \ref{Theorem Main}. The details are left to the reader.

\section{Final remarks}

Let us conclude the paper with two more open problem concerning the special case of acyclic countable digraphs. Let $D$ be a finite digraph not containing directed cycles. As observed in \cite{Kreutzer}, $D$ is easily majority $2$-colorable. Indeed, we may linearly order the vertices of $D$ so that all out-going neighbors of each vertex $v$ are to the left of $v$. Then we may use the greedy coloring algorithm to get a majority coloring of $D$ by choosing in each step one of the two colors that appears not more frequently than the other on the out-neighbors of the current vertex. Curiously, it is not clear what is going on for infinite acyclic digraphs.

\begin{conjecture}
	Every countable acyclic digraph is majority $2$-colorable.
\end{conjecture}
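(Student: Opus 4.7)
The plan is to follow the three-stage template of Theorem \ref{Theorem Main}, with the acyclicity of $D$ standing in for the list-flexibility that Lemma \ref{Lemma Infinite Sets} provides. Partition $V(D) = F \cup I$ into vertices of finite and infinite out-degree, and split $I = A \cup B$ with $A = \{v \in I : |N^+(v) \cap I| = \infty\}$ and $B = I \setminus A$ (so $|N^+(v) \cap F| = \infty$ for $v \in B$). The coloring proceeds by pre-coloring $I$, then coloring $F$, and finally revising $B$ if necessary.

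Stage 1 asks for a two-coloring of $I$ such that $N^+(v) \cap I$ is not almost monochromatic whenever $v \in A$. Since only two colors are available, Lemma \ref{Lemma Infinite Sets} does not apply; instead I would attempt the analogous back-and-forth with \emph{commitments} replacing list pruning: enumerate $A = \{v_1, v_2, \ldots\}$, follow the block sequence $S = B_1 B_2 \ldots$ from the proof of Lemma \ref{Lemma Infinite Sets}, and at each visit to $v_{s_i}$ commit a color on the next uncommitted vertex in $N^+(v_{s_i}) \cap I$, alternating the committed color across successive visits to the same $v_{s_i}$. Acyclicity of $D[I]$ is the ingredient ensuring that an earlier commitment is never retroactively contradicted by a later one.

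Stage 2 colors $F$: the induced subdigraph $D[F]$ is countable, locally finite, and acyclic, and every finite acyclic digraph is majority $2$-colorable by the reverse-topological greedy argument recalled in Section 3. That greedy argument adapts to the presence of boundary data (some out-neighbors pre-colored in $I$), because among the out-neighbors already decided at least one of the two colors appears at most half the time. Compactness extends the finite case to a coloring of all of $F$ consistent with Stage 1 and satisfying the majority condition at every $v \in F$. Stage 3 then examines each $v \in B$: if $N^+(v) \cap F$ is almost monochromatic in some color $c$ after Stage 2, we set $c(v) \neq c$; to avoid conflict with Stage 1, Stages 1 and 3 must be interleaved, committing $c(v)$ for $v \in B$ only after enough of $F \cap N^+(v)$ has been decided.

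The main obstacle is Stage 1. The three-color slack in Lemma \ref{Lemma Infinite Sets} kept commitments serving one $v \in A$ from over-constraining another; with only two colors that slack is gone, and the problem on $D[A]$ alone already amounts to a directed unfriendly partition. Acyclicity should supply the missing structural advantage — plausibly via a transfinite reverse-rank decomposition of $A$ that orders commitments from ``deep'' vertices back to ``shallow'' ones — but infinite descending chains in $D$ prevent any naive rank from reaching every vertex, and turning acyclicity into a workable ordering of commitments is, I expect, the crux of the conjecture and the reason it remains open.
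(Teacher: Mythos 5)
This statement is one of the paper's open conjectures, not a theorem: the authors give no proof of it, and explicitly note only that the three-colour version is established in \cite{Anholcer BG 2} and that the finite case follows from the reverse-topological greedy argument. So there is no ``paper's own proof'' to compare against, and your proposal --- as you yourself concede in the final paragraph --- does not close the argument either. You have correctly located where the difficulty sits, but the write-up should be read as a description of an obstruction rather than a proof.

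To name the gap concretely: the three-stage template of Theorem \ref{Theorem Main} is driven by slack that two colours do not provide, and acyclicity is not known to substitute for it. In Stage 1, Lemma \ref{Lemma Infinite Sets} works because from a $3$-list one can always pick a pair $P$ avoiding a common colour with the two previous pairs; with two colours your ``commitment'' version must actually produce infinitely many vertices of \emph{each} colour in every $N^+(v)\cap I$ while those same vertices are simultaneously constrained as tails of their own out-edges, and acyclicity of $D[I]$ gives no usable well-founded order when infinite descending chains exist. Worse, the dependency between $F$ and $I$ is genuinely circular with two colours: a vertex $v\in F$ whose out-neighbours all lie in $I$ cannot be coloured robustly against an undetermined colouring of $I$ (whatever colour you give $v$, the later choices on $N^+(v)\cap I$ can make every out-edge bad), while the Stage 3 recolouring of $B$ depends on the colouring of $F$; Lemma \ref{Lemma List Bernardi} breaks this circle only because $|L(v)|=4$ leaves a colour $b$ with $n^F_v(b)+m_v(b)<r_v(b)$, and that counting argument fails with two colours. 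Finally, your Stage 3 flips may occur on infinitely many vertices of $B$ that were used as Stage 1 commitments for some $v\in A$, so the interleaving you gesture at would itself need a new idea. These are exactly the reasons the conjecture is posed as open, and your proposal, while a reasonable reconnaissance of the problem, should not be presented as a proof.
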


We proved in \cite{Anholcer BG 2} that the above conjecture is true with three colors. However, it does not seem to be possible to extend this result on the list variant by the methods of the present paper. On the other hand, the greedy coloring algorithm works in the list setting as well, so we know that every finite acyclic digraph is majority $2$-choosable. This prompts us to stating the following extension of the last conjecture.

\begin{conjecture}
	Every countable acyclic digraph is majority $2$-choosable.
\end{conjecture}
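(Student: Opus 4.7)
The plan is to parallel the three-stage architecture of Theorem \ref{Theorem Main}. Split $V(D)$ into $F$, the vertices of finite out-degree, and $I$, the vertices of infinite out-degree, and color each in stages, with every vertex carrying a list of size two. The fundamental obstacle compared to Theorem \ref{Theorem Main Directed} is that Lemma \ref{Lemma Infinite Sets} is formulated for lists of size three, so its back-and-forth device is not directly available on 2-lists; acyclicity must play the role of the extra freedom.

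For the set $I$, I would split it further into $A$, the vertices $u$ with $|N^+(u) \cap I| = \infty$, and $B$, the rest. Vertices in $B$ will be handled later by the coloring of $F$, exactly as in Stage 3 of the proof of Theorem \ref{Theorem Main}, since each $u \in B$ has infinitely many out-neighbors in $F$ and only finitely many in $I$. For $A$ the difficulty is real. Using acyclicity of $D[A]$, I would attempt a back-and-forth enumeration that commits the color of each $u \in A$ only after enough of its out-neighbors inside $A$ have been fixed, and that forces $N^+(u) \cap A$ ultimately to contain infinitely many vertices whose color differs from $c(u)$. The triple-rotation trick of Lemma \ref{Lemma Infinite Sets} is not available on 2-lists, but the strict partial order carried by $D[A]$ is meant to supply the analogous flexibility: at each visit one can steer a fresh, lower-ranked out-neighbor of $u$ toward the opposite colour in $L(u)$.

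For $F$, I would aim for a 2-list, acyclic analog of Lemma \ref{Lemma Bernardi Directed}, replacing the hypothesis $|L(v)|=4$ by the assumption that $D[F]$ is acyclic, and weakening condition (2.5) to $\sum_{x \in L(v)} r_v(x) \geq \deg^+(v)$. In the acyclic case the inductive proof of Lemma \ref{Lemma Bernardi Directed} collapses into greedy coloring along a reverse-topological enumeration of $F$: at each step one picks from the 2-list the color that is minority among the already-fixed out-neighbors, which bounds the bad out-edges by $\deg^+(v)/2 \leq \max(r_v(a), r_v(b))$ when $c(v)$ is chosen to realise the maximum. Contributions from the previously coloured set $I$ are absorbed into the $r_v$ values exactly as in the statement of Lemma \ref{Lemma Bernardi Directed}, and then Stage 2 delivers a coloring of $F$ valid against any future coloring of $B$.

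The principal obstacle is that a countable acyclic digraph need not admit a reverse-topological enumeration of order type $\omega$: a single infinite directed path $v_0 \to v_1 \to v_2 \to \cdots$ already blocks it, and no ordinal rank function is available without well-foundedness. This is exactly the phenomenon that keeps the non-list version of the conjecture open. I would try to overcome it by a transfinite peeling along the well-founded core of $D$, followed by a compactness argument applied inside the locally finite residue; but I expect the essential new combinatorial ingredient --- namely a 2-list substitute for Lemma \ref{Lemma Infinite Sets} that uses acyclicity to compensate for the missing third color --- to require a genuinely new idea, precisely because the three-pair rotation underlying Lemma \ref{Lemma Infinite Sets} has no direct 2-color analog.
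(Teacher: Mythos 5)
This statement is one of the paper's open conjectures, not a theorem: the authors prove nothing here, and in fact they remark explicitly that the methods of the paper ``do not seem to be possible to extend'' to the list variant for countable acyclic digraphs. So the question is only whether your sketch closes the gap on its own, and it does not. You candidly flag two of the obstacles yourself --- the absence of a $2$-list substitute for Lemma \ref{Lemma Infinite Sets} on the set $A$, and the fact that a countable acyclic digraph need not be well-founded, so no reverse-topological enumeration of $F$ exists --- and you resolve neither; what you have is a plan whose two load-bearing components are each declared to ``require a genuinely new idea.'' That is an honest research program, not a proof.

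There is also a gap you do not flag: the proposed $2$-list acyclic analog of Lemma \ref{Lemma Bernardi Directed} is false in the form your architecture needs. The whole point of Stage 2 is that the coloring of $F$ must be \emph{robust against every} future coloring of $I$ from the $2$-lists, so that Stage 3 is free to serve the vertices of $B$. But take $v\in F$ with $L(v)=\{a,b\}$ and all out-neighbors $u_1,\dots,u_k$ in $I$ with $L'(u_i)=\{a,b\}$; whatever $c(v)$ is, the adversary sets $c'(u_i)=c(v)$ and all $k$ out-edges of $v$ are bad, so no assignment with $r_v(c(v))\leqslant \tfrac{1}{2}\deg^+(v)$ can be achieved, and acyclicity is no help since these configurations contain no directed cycle. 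Quantitatively, the $4$-element lists in Lemma \ref{Lemma Bernardi Directed} are exactly what guarantees $\min_{x\in L(v)}\bigl(n^F_v(x)+m_v(x)\bigr)\leqslant \tfrac{1}{2}\deg^+(v)$, because each $2$-list neighbor in $I$ charges only $2$ of the $4$ colors; with $2$-lists the guaranteed minimum degrades to $\deg^+(v)$, which is useless for the majority condition. So the three-stage architecture cannot be transplanted to $2$-lists by weakening condition (2.5); one would need a fundamentally different way to coordinate the colorings of $F$ and $I$, which is precisely why the conjecture remains open.
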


\end{document}